\let\proof\@undefined
\let\endproof\@undefined
\newtheorem{proposition}{Proposition}
\newtheorem{definition}{Definition}
\newtheorem{theorem}{Theorem}
\newcommand{\dist}{\mathrm{dist}}
\newcommand{\cvar}{\mathrm{CVaR}}
\title{\LARGE \bf
Safety-Aware Optimal Control of Stochastic Systems\\ Using Conditional Value-at-Risk}
\author{Samantha Samuelson \and Insoon Yang 
\thanks{This work is supported in part by NSF under  ECCS-1708906 and CNS-1657100. 
S. Samuelson and I. Yang are with the Electrical Engineering Department,
University of Southern California, Los Angeles, CA 90089, USA.
        {\tt\small \{sasamuel,  insoonya\}@usc.edu}}%
}
\begin{document}

\maketitle
\thispagestyle{empty}
\pagestyle{empty}

\begin{abstract}
In this paper, we consider a multi-objective control problem for stochastic systems that seeks to minimize a cost of interest while ensuring safety.
We introduce a novel measure of safety risk 
using the conditional value-at-risk  and a set distance  to formulate a safety risk-constrained optimal control problem.
Our reformulation method using an extremal representation of the safety risk measure
provides a computationally tractable dynamic programming solution. 
A useful byproduct of the proposed solution is the notion of a \emph{risk-constrained safe set}, which is a new stochastic safety verification tool. 
We also establish useful connections between the risk-constrained safe sets and the popular probabilistic safe sets.
The tradeoff between the risk tolerance and the mean performance of our controller is examined through an inventory control problem.
\end{abstract}
\section{Introduction}

Control and verification of safety-critical systems have been 
an important problem in many domains such as 
 air traffic control, autonomous vehicles, robotics, energy systems, and food supply chains.
 A part of such systems can often be modeled as a stochastic system due to the environmental and/or model uncertainty.
To verify that a stochastic system is evolving within a safe range of operation with a pre-specified probability and to construct an associated safety-preserving controller, several reachability-based tools have been developed using  Markov chain approximations
\cite{Hu2005},
Hamilton-Jacobi-Isaacs reachability \cite{Mitchell2005},
barrier certificates \cite{Prajna2007},
dynamic programming for probabilistic safe sets \cite{Abate2008}, and infinite-dimensional linear programming \cite{Kariotoglou2017}, among others.
However, ensuring safety may not be the only objective in practice:  it is also desirable to minimize a cost function of interest by employing an optimal controller.
Even with the aforementioned verification tools, multi-objective stochastic optimal control of safety-critical systems is challenging. 
One safety-oriented suboptimal approach is to use a safe control action whenever the system ventures near the boundary of a probabilistic safe set; otherwise, an optimal control action is used~\cite{Yang2017aut}.  
A lexicographic optimal control approach has been proposed in \cite{Lesser2018} to guarantee that the probability for a system being safe is close to the maximum possible safety probability. 
On the other hand, 
\cite{Svorenova2013} uses linear temporal logic as a constraint to enforce safety with probability 1  in an optimal control problem.

Departing from such probabilistic and temporal logic-based methods,
this paper proposes a risk-based approach to solving safety-aware optimal control of stochastic systems.
Our method employs the conditional value-at-risk (CVaR) and a set distance to measure the risk of a system being unsafe.  By solving an optimal control problem with associated safety risk constraints, we can design a control strategy that minimizes a cost function of interest while limiting the risk of unsafety.
Unlike the probabilistic methods \cite{Yang2017aut}, \cite{Lesser2018}, our proposed method does not require us to separately solve a verification problem to compute probabilistic safe sets or safe control policies. 
In other words, the risk-based approach merges the verification and optimal control procedures into a single step.
A useful byproduct of
our risk-constrained optimal control method is a novel verification tool, called the \emph{risk-constrained safe set}.
Such a set contains all initial states that can be driven to satisfy all safety risk constraints by an admissible control policy.

The contributions of this work can be summarized as follows.
First, we introduce a novel measure of safety risk by using CVaR and 
the distance between the system state and a desired set $A$ for safety. 
This safety risk measure represents the conditional expectation of the distance between the state and $A$ within the $(1-\alpha)$ worst-case quantile of an associated safety loss distribution, where $\alpha \in (0,1)$.
Our method enjoys
an important advantage of CVaR over the value-at-risk (VaR) or the chance constraints that 
CVaR takes into account the possibility of tail events in which safety losses exceed VaR while VaR is incapable of distinguishing situations beyond VaR~\cite{Rockafellar2002a}.
Second, 
we develop a computationally tractable dynamic programming solution through two reformulation procedures. 
The first step reformulates the CVaR constraint in an associated Bellman equation into a tractable expectation constraint.  
The second step removes the minimization problem for computing a set distance from the constraint in the case of finitely supported disturbance distributions.
As a result, the reformulated Bellman equation can be solved by existing convex optimization algorithms when the system dynamics are affine and the cost function is convex.
Third, we establish interesting connections between the proposed risk-constrained safe sets and the popular probabilistic safe sets.
In addition, we propose a simple method to compute the risk-constrained safe sets from the Bellman equation.
The tradeoff between the mean performance and risk tolerance of our controller is also studied through an example of stochastic inventory control.

The remainder of this paper is organized as follows.
Section \ref{safety} introduces the safety risk measure and 
an associated safety-aware optimal control problem.
Its dynamic programming solution is developed in Section \ref{dp}. 
The connections between risk-constrained safe sets and probabilistic safe sets are discussed in Section \ref{rs}.
The performance and risk aversion of the designed controller are demonstrated in Section \ref{ex} through an application to inventory control.


%
%
%
%
%
%
%
%
%
%

%
\section{Set Distance-Based Safety Risk}\label{safety}


Consider the following discrete-time stochastic system:
\begin{equation}\label{sys}
\begin{split}
x_{t+1} &= f(x_t, u_t, w_t),
\end{split}
\end{equation}
where $x_t \in \mathbb{R}^n$ is the system state. 
The control input $u_t$ is assumed to lie in a convex set $\mathbb{U} \subseteq \mathbb{R}^m$.
The stochastic disturbance $w_t \in \mathbb{W} \subseteq \mathbb{R}^l$ is defined on a standard filtered probability space $(\Omega, \mathcal{F}, \{\mathcal{F}_t\}_{t \geq 0},\mathbb{P})$. Note that with the filtration $\{\mathcal{F}_t\}_{t \geq 0}$, $w_t$ is $\mathcal{F}_t$-measurable and thus
so is $x_{t+1}$.
We say that the system is \emph{safe} at stage $t$ if $x_t$ lies in a desired set $A$ for safety, where $A$ is a compact Borel set in $\mathbb{R}^n$. 
The set $A$ represents a safe range of operation in the state space.
Such a setting has been extensively used in the literature of stochastic reachability analysis (e.g., \cite{Abate2008}).
As a first step to unify the safety specification and optimal control of stochastic systems, we propose a novel notion of safety risk using set distance and Conditional Value-at-Risk in the following subsection.

\subsection{Safety Specification Using Conditional Value-at-Risk}

For stochastic systems of the form \eqref{sys}, we can measure the \emph{loss of safety} or the \emph{degree of unsafety} at stage $t$ as the distance between $x_t$ and the set $A$. 
The distance between a point $\bm{x} \in \mathbb{R}^n$ and a set $A \subseteq \mathbb{R}^n$ is defined as follows:
\begin{equation}\label{dist}
\mathrm{dist}(\bm{x},  A) := \inf_{y  \in A} \| \bm{x} - y \|.
\end{equation}
If the system is safe at stage $t$, i.e., $x_t \in A$, 
then the loss of safety $\dist(x_t,  A) = 0$.
On the other hand, when the system is unsafe, i.e., $x_t \notin A$,  the loss of safety increases as $x_t$ moves farther from the desired set $A$.
Note that $\bm{x} \mapsto \mathrm{dist}(\bm{x}, A)$ is convex due to the triangular inequality when the set $A$ is convex. 
In addition, there exists a minimizer if the set $A$ is compact.
We assume that our desired set $A$ for safety is convex and compact.

To quantify safety risk, we adopt Conditional Value-at-Risk (CVaR) among several risk measures that are \emph{coherent} in the sense of Artzner \emph{et al.}~\cite{Artzner1999}.
CVaR measures the expected value conditioned on being within a user-specified percentage ($(1-\alpha) \times 100 \%$) of the worst-case loss scenario.  
CVaR of a random loss $X$ is defined as\footnote{This definition is valid when the probability distribution of $X$ has no atom. For the definition of CVaR in general cases, refer to \cite{Rockafellar2002a}.}
$\cvar_{\alpha}(X) := \mathbb{E} [ X \mid X \geq \mathrm{VaR}_{\alpha} (X)]$
 with $\alpha \in (0,1)$, where the value-at-risk (VaR) of $X$ (with the cumulative distribution function  $F_X$)  is defined as $\mathrm{VaR}_\alpha (X) := \inf \{x \in \mathbb{R} \mid F_X(x) \geq \alpha \}$.
The following \emph{extremal} representation  of CVaR is particularly useful in optimization of CVaR \cite{Rockafellar2000, Rockafellar2002a}:
\begin{equation}\label{cvar}
\cvar_{\alpha}(X) = \min_{z\in \mathbb{R}} \mathbb{E} \bigg [ z + \frac{(X-z)^+}{1-\alpha} \bigg ].
\end{equation}
Suppose that the minimization problem above has a unique optimal solution.  Then, the optimal solution corresponds to VaR at probability~$\alpha$, and CVaR represents VaR plus the expected safety losses exceeding VaR divided by $1-\alpha$.

Using CVaR, we can quantify the risk of the system unsafety at stage $t+1$ given the information collected up to stage $t-1$
 as
\begin{equation} \label{risk}
\begin{split}
&\mathrm{CVaR}_\alpha [\dist (x_{t+1}, A) \mid \mathcal{F}_{t-1}] := 
\\
&\min_{Z\in \mathcal{L}_2 (\Omega, \mathcal{F}_{t-1}, \mathbb{P})} Z +  \mathbb{E} \bigg [\frac{(\dist(x_{t+1}, A)-Z)^+}{1-\alpha} \mid \mathcal{F}_{t-1} \bigg ],
 \end{split}
\end{equation}
which is a random variable adapted to $\mathcal{F}_{t-1}$.
The safety risk  \eqref{risk} measures
the conditional expectation of the distance between $x_{t+1}$ and the desired set $A$  within the $(1-\alpha)$ worst-case quantile of the safety loss distribution.
Note  that we use the conditional version of CVaR (conditioned on $\mathcal{F}_{t-1}$)~\cite{Ruszczynski2006a, Ruszczynski2010}, which is not only practical but also essential to formulate an optimal control problem in a \emph{time-consistent} way as explained in Section \ref{consistency}.

\subsection{Safety-Aware Stochastic Optimal Control}

Our goals in designing a controller are twofold: while controlling the system \eqref{sys}, we want $(i)$ to limit the safety risk \eqref{risk} by a predefined threshold $\delta$ and $(ii)$ to minimize a cost function of interest.
These objectives can be achieved by solving the following risk-constrained stochastic optimal control problem:
\begin{equation} \label{control}
\begin{split}
\min_{\pi \in \Pi} \quad &\mathbb{E}^\pi \bigg [
\sum_{t=0}^{T-1} r(x_t, u_t) + q(x_T)
\bigg ]\\
\mbox{s.t.} \quad& \mathrm{CVaR}_\alpha^\pi [\dist (x_{t+1}, A) \mid \mathcal{F}_{t-1}] \leq \delta, t \in \mathcal{T},
\end{split}
\end{equation}
where 
$r : \mathbb{R}^n \times \mathbb{R}^m \to \mathbb{R}$ and $q : \mathbb{R}^n \to \mathbb{R}$ are a stage-wise and terminal cost function of interest, respectively, and $\mathcal{T} := \{0,1, \cdots, T-1\}$.
Here, the set $\Pi$  of admissible control strategies is given by $\Pi := \{ \pi := (\pi_0, \cdots, \pi_{T-1}) \mid
\pi_t ( \mathbb{U} | h_t) = 1 \; \forall h_t \in H_t \}$, where $H_t$ is the set of \emph{histories} up to stage $t$ whose element is of the form $h_t := (x_0, w_0, \cdots, x_{t-1}, w_{t-1}, x_t)$ and $\pi_t$ is a stochastic kernel from $H_t$ to $\mathbb{U}$.
In addition, $\mathbb{E}^\pi$ and $\cvar_\alpha^\pi$ represent the expectation and CVaR taken with respect to the probability measure induced by a control strategy $\pi$.

The risk tolerance parameter~$\delta$ is nonnegative to be consistent with the nonnegativity of $\dist (x_{t+1}, A)$.
When $\delta = 0$, $x_{t+1}$ must lie in the set $A$ with probability $1$, and thus the risk constraint becomes a hard (deterministic) constraint. The threshold $\delta$ is a user-specified design parameter and has a practical meaning: $\delta$ represents the maximum allowable expected deviation of the state from the set $A$ conditioned on being in the $(1-\alpha)$ worst-case quantile. The effect of $\delta$ on the minimal cost
 depends on problem instances and is studied through an example in Section \ref{ex}. 
 

\section{Dynamic Programming and Convexity}\label{dp}

\subsection{Time-Consistency and Bellman Equation} \label{consistency}

Suppose for a moment that we employ different safety risk constraints of the form $\mathrm{CVaR}_\alpha [\dist (x_{s+1}, A) \mid \mathcal{F}_{0}] \leq \delta$ $\forall s \in \mathcal{T}$. In words, we guarantee the risk constraints assuming that all of them are viewed at stage $0$ with no information.
The tower rule (or the law of total expectation) does not hold for CVaR. Thus,
$\mathrm{CVaR}_\alpha [ \mathrm{CVaR}_\alpha [\dist (x_{t+1}, A) \mid \mathcal{F}_{t-1}] \mid \mathcal{F}_0] \neq  \mathrm{CVaR}_\alpha [\dist (x_{t+1}, A) \mid \mathcal{F}_{0}]$, which implies that the risk constraint may be violated when evaluated  at stage $t$ with information collected up to stage $t-1$. Therefore, this problem formulation is \emph{time-inconsistent}, meaning that an optimal control strategy constructed before or at stage $0$ is no longer optimal when viewed at later stages~\cite{Artzner2007}. 
Dynamic programming is not directly applicable to such a time-inconsistent problem as we cannot break the problem into sub-problems whose optimal solutions can be used to solve the original problem.
There are two main paths to resolve the issue of time-inconsistency. The first is to focus on optimal pre-commitment strategies that are optimal viewed only at stage $0$, and cannot be revised at later stages. Several techniques have been developed to compute an optimal pre-commitment strategy for optimal control of CVaR~\cite{Bauerle2011, Borkar2014, Chow2015, Haskell2015, Pflug2016, Miller2017}.
The second strategy is to employ time-consistent dynamic risk measures that guarantee the time-consistency of the control problem~\cite{Artzner2007, Ruszczynski2010, Chow2013, Cavus2014}. 
This approach requires special care when interpreting the practical meaning of such risk measures as they are usually defined as a composition of multiple conditional risk mappings. 

Our problem formulation is closely related with the second approach: our conditional version of CVaR \eqref{risk} ensures the time-consistency of the optimal control problem \eqref{control}.
By solving \eqref{control}, a control strategy is designed offline before stage $0$ to satisfy the risk constraint $\mathrm{CVaR}_\alpha [\dist (x_{t+1}, A) \mid \mathcal{F}_{t-1}] \leq \delta$ at stage $t$ using information gathered up to stage $t-1$.
Thus, the designed control strategy ensures the
risk constraint when viewed and evaluated at stage $t$. 
To check the applicability of dynamic programming, we decompose \eqref{control} into multiple sub-problems whose optimal solutions can be used to design an optimal strategy for \eqref{control}.
We define the value function associated with \eqref{control} as follows:
\begin{equation}\label{value}
\begin{split}
v_t(\bm{x}) := \inf_{\pi  \in \Pi}  \: &\mathbb{E}^\pi \bigg [ \sum_{s=t}^{T-1} r(x_s, u_s) + q(x_T) \mid x_t = \bm{x}
\bigg ]\\
\mbox{s.t} \:\: & \mathrm{CVaR}_\alpha^\pi [\dist (x_{s+1}, A) \mid \mathcal{F}_{s-1}] \leq \delta,   s \in \mathcal{T}_t,
\end{split}
\end{equation}
which represents the minimum expected cost-to-go given the safety risk constraints are satisfied for all stages from $t$ to $T-1$, where $\mathcal{T}_t := \{t, t+1, \cdots, T-1\}$. 
We now use backward induction to confirm that 
the subproblems \eqref{value} can be used to solve \eqref{control}. Note that $v_T$ is given by $q$.
Suppose now that $v_{t+1}$ is known.
Then, 
\begin{equation}\label{original}
\begin{split}
v_t(\bm{x}) = \inf_{\bm{u} \in \mathbb{U}} \;\; &\mathbb{E} [ r(\bm{x}, \bm{u}) + v_{t+1} (f(\bm{x}, \bm{u}, w_t)) ] \\
\mbox{s.t.} \;\; &\cvar_\alpha [\dist (x_{t+1}, A) \mid \mathcal{F}_{t-1} ] \leq \delta
\end{split}
\end{equation}
 because  the risk constraints for $s \in \mathcal{T}_{t+1}$ are considered in the optimization problem \eqref{value} for $v_{t+1}$.
However, the Bellman equation \eqref{original} involves a triple-level minimization problem with $(i)$ the outer minimization problem over $\bm{u}$,  $(ii)$ the middle-level minimization problem \eqref{risk} for CVaR and $(iii)$ the inner minimization problem \eqref{dist} for the distance function.
We can significantly simplify the Bellman equation by reformulating the CVaR constraints as expectation constraints.


\begin{theorem}[Bellman equation I] \label{reform1}
The value function defined in \eqref{value} satisfies the following Bellman equation:
\begin{equation} \label{bellman}
\begin{split}
v_t(\bm{x}) = 
 \inf_{\bm{u} \in \mathbb{U}, z \in \mathbb{R}} \;\; &r(\bm{x}, \bm{u}) + \mathbb{E} [ v_{t+1}(f(\bm{x}, \bm{u}, w_t))  ]\\
\mathrm{s.t.} \;\; &\mathbb{E} \bigg [ z +
\frac{(\dist (f(\bm{x}, \bm{u}, w_t), A) - z)^+}{1-\alpha} \bigg ]
 \leq \delta
\end{split}
\end{equation}
for $t \in \mathcal{T}$
with $v_T(\bm{x}) = q(\bm{x})$.
\end{theorem}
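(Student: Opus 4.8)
The plan is to begin from the intermediate Bellman recursion \eqref{original}, whose derivation from \eqref{value} is already assumed, and to trade the conditional CVaR constraint for a plain expectation constraint by invoking the extremal representation \eqref{risk}. The central observation is that the inner minimization in \eqref{risk} is attained: the scalar map $z \mapsto z + \tfrac{1}{1-\alpha}\mathbb{E}[(\dist(x_{t+1},A)-z)^+ \mid \mathcal{F}_{t-1}]$ is convex and coercive, its minimizer being the conditional value-at-risk. Hence the constraint $\cvar_\alpha[\dist(x_{t+1},A)\mid\mathcal{F}_{t-1}]\le\delta$ holds if and only if there exists some $Z\in\mathcal{L}_2(\Omega,\mathcal{F}_{t-1},\mathbb{P})$ with
\[
Z + \mathbb{E}\Big[\tfrac{(\dist(x_{t+1},A)-Z)^+}{1-\alpha}\,\Big|\,\mathcal{F}_{t-1}\Big]\le\delta \quad \text{a.s.}
\]
In other words, a ``$\min_Z(\cdot)\le\delta$'' constraint is equivalent to feasibility of the bracketed quantity for a single well-chosen $Z$, which lets me promote $Z$ to a decision variable rather than carrying a nested minimization inside the constraint.

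Next I would reduce the $\mathcal{F}_{t-1}$-measurable variable $Z$ to a real scalar $z$. Equation \eqref{original} evaluates $v_t(\bm{x})$ with the present state pinned to $x_t=\bm{x}$ and a fixed action $u_t=\bm{u}$; by the same Markov reasoning used to pass from \eqref{value} to \eqref{original}, the conditional law of $x_{t+1}=f(\bm{x},\bm{u},w_t)$ given $\mathcal{F}_{t-1}$ is determined entirely by $(\bm{x},\bm{u})$ and the law of $w_t$, so the conditional expectation collapses to an ordinary expectation over $w_t$. On the conditioning event the pointwise minimizer of the convex scalar objective is a constant, so $Z$ may be replaced without loss by a number $z\in\mathbb{R}$, and the constraint becomes exactly the expectation constraint
\[
\mathbb{E}\Big[ z + \tfrac{(\dist(f(\bm{x},\bm{u},w_t),A)-z)^+}{1-\alpha}\Big]\le\delta .
\]

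Finally I would fold the existential quantifier on $z$ into the outer optimization. Because the objective $r(\bm{x},\bm{u})+\mathbb{E}[v_{t+1}(f(\bm{x},\bm{u},w_t))]$ does not involve $z$, minimizing over $\bm{u}\in\mathbb{U}$ subject to the \emph{existence} of a feasible $z$ coincides with jointly minimizing over $(\bm{u},z)\in\mathbb{U}\times\mathbb{R}$ subject to the expectation constraint; this delivers \eqref{bellman}, with the terminal condition $v_T=q$ inherited directly from \eqref{value}.

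I expect the main obstacle to be the second step, namely justifying that the $\mathcal{F}_{t-1}$-measurable $Z$ may be taken constant. This rests on the subproblem being posed with the present state fixed to $\bm{x}$, so that the only relevant conditional distribution is the one induced by $(\bm{x},\bm{u})$ and the disturbance; making the measurability bookkeeping precise---in particular that an optimal $Z$ restricted to $\{x_t=\bm{x}\}$ is deterministic---is what turns the function-valued minimization into a scalar one. The remaining pieces are an attainment argument for CVaR and the elementary fact that a variable absent from the objective can be absorbed into the feasible set.
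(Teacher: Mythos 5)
Your proposal is correct and follows essentially the same route as the paper: both convert the CVaR constraint into an expectation constraint via the extremal representation \eqref{cvar} and then absorb the auxiliary variable $z$ into the joint minimization over $(\bm{u},z)$, the paper doing the latter through two $\epsilon$-inequalities while you phrase it as an equivalence of feasible sets. If anything, you are more explicit than the paper about the attainment of the inner minimum and the reduction of the $\mathcal{F}_{t-1}$-measurable $Z$ to a scalar, two points the paper's proof passes over silently.
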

\begin{proof}
Using the dynamic programming principle, we have
\begin{equation}\nonumber
\begin{split}
v_t(\bm{x}) = \inf_{\bm{u} \in \mathbb{U}} \quad &r(\bm{x}, \bm{u}) + \mathbb{E} [ v_{t+1}(f(\bm{x}, \bm{u}, w_t)) ]\\
\mbox{s.t} \quad & \mathrm{CVaR}_\alpha [\dist (f(\bm{x}, \bm{u}, w_t), A)] \leq \delta,
\end{split}
\end{equation}
which is equivalent to \eqref{original}.
We denote the right-hand side of \eqref{bellman} as $\hat{v}_t(\bm{x})$ and show that $\hat{v}_t = v_t$.
To show that $\hat{v}_t(\bm{x}) \leq v_t (\bm{x})$ fixing an arbitrary $\bm{x} \in \mathbb{R}^n$, we first note that
for any $\epsilon > 0$ there exists $\bm{u}^\star \in \mathbb{U}$ such that
\begin{equation} \label{eps2}
v_t(\bm{x}) + \epsilon > r(\bm{x}, \bm{u}^\star)
 +\mathbb{E}[ v_{t+1}(f(\bm{x}, \bm{u}^\star, w_t)) ]
\end{equation}
and
\[
\mathrm{CVaR}_\alpha [\dist (f(\bm{x}, \bm{u}^\star, w_t), A) ] \leq \delta.
\]
By the extremal representation \eqref{cvar} of CVaR, the second inequality is equivalent to
\[
\min_{z \in \mathbb{R}} \mathbb{E} \bigg [
z + \frac{(\dist(f(\bm{x}, \bm{u}^\star, w_t), A)- z)^+}{1-\alpha}
\bigg ] \leq \delta,
\]
which implies that  there exists $z^\star \in \mathbb{R}$ such that
\[
\mathbb{E} \bigg [ z^\star + \frac{(\dist(f(\bm{x}, \bm{u}^\star, w_t), A)- z^\star)^+}{1-\alpha}  \bigg ]
\leq \delta.
\]
Combining this with the inequality \eqref{eps2}, we have
\begin{equation} \nonumber
\begin{split}
v_t(\bm{x}) + \epsilon &> \\
\inf_{\bm{u} \in \mathbb{U}, z \in \mathbb{R}} \;
&r(\bm{x}, \bm{u}) + \mathbb{E} [ v_{t+1}(f(\bm{x}, \bm{u}, w_t))] \\
\mathrm{s.t.} \; & 
\mathbb{E} \bigg [ z + \frac{(\dist(f(\bm{x}, \bm{u}, w_t), A)- z)^+}{1-\alpha}  \bigg ] \leq \delta.
\end{split}
\end{equation}
Letting $\epsilon \to 0$, we have $\hat{v}_t(\bm{x}) \leq v(\bm{x})$.

We now show that $\hat{v}_t(\bm{x}) \geq v(\bm{x})$.
For any $\epsilon > 0$, there exists $(\hat{\bm{u}}, \hat{\bm{z}}) \in \mathbb{U} \times \mathbb{R}$ such that
\[
\hat{v}_t(\bm{x}) + \epsilon >
r(\bm{x}, \hat{\bm{u}}) + \mathbb{E}[ v_{t+1}( f(\bm{x}, \hat{\bm{u}}, w_t)) ]
\]
and 
\[
\mathbb{E} \bigg [ \hat{z} + \frac{(\dist(f(\bm{x}, \hat{\bm{u}}, w_t), A) - \hat{z})^+}{1-\alpha} \bigg ] \leq \delta.
\]
Due to the extremal formula \eqref{cvar} of CVaR, the second inequality implies that 
\[
\cvar_\alpha [ \dist( f(\bm{x}, \hat{\bm{u}}, w_t), A)  ] \leq \delta.
\]
Therefore,
$\hat{v}_t(\bm{x}) + \epsilon > v_t(\bm{x})$, which implies that $\hat{v}_t(\bm{x}) \geq {v}_t(\bm{x})$ as $\epsilon \to 0$.
\end{proof}
%
%
%
%
%

The minimization problem in the reformulated Bellman equation \eqref{bellman} is a computationally tractable stochastic program while the original Bellman equation \eqref{original} involves a nontrivial CVaR constraint. 
A similar reformulation approach has been proposed by Krokhmal \emph{et al.}~\cite{Krokhmal2002} in the context of single-stage optimization with CVaR constraints. Unlike their method based on the Karush-Kuhn-Tucker conditions, however, our proof does not assume the existence of an optimal solution $\bm{u}^{opt}$ or the convexity of the objective and constraint functions.
In other words, the proposed method not only yields a computationally tractable version of the Bellman equation but also broadens the applicability of the efficient reformulation method in \cite{Krokhmal2002} for CVaR-constrained optimization.
We will further enhance the computational tractability of \eqref{bellman} 
 in Section \ref{empirical}.

\subsection{Convexity of Value Functions}

%
%
%
%
%
%

We now provide conditions under which the stochastic program in the Bellman equation \eqref{bellman} and the value function $v_t$ are convex.

\begin{proposition} \label{convex}
Suppose that $(\bm{x}, \bm{u}) \mapsto f(\bm{x}, \bm{u}, \bm{w})$ is affine on $\mathbb{R}^n \times \mathbb{U}$ for each $\bm{w} \in \mathbb{W}$,  $r$ is convex on $\mathbb{R}^n \times \mathbb{U}$, and $q$ is convex on $\mathbb{R}^n$. Then, the value function $v_t$ is convex on $\mathbb{R}^n$ for all $t \in \bar{\mathcal{T}}$.
\end{proposition}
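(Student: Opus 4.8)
The plan is to argue by backward induction on $t$, using the reformulated Bellman recursion \eqref{bellman} established in Theorem \ref{reform1} as the defining relation for $v_t$. The base case is immediate, since $v_T = q$ is convex by hypothesis. For the inductive step I would assume $v_{t+1}$ is convex on $\mathbb{R}^n$ and show that the parametric infimum defining $v_t(\bm{x})$ yields a convex function of $\bm{x}$.

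First I would establish joint convexity of the objective in $(\bm{x}, \bm{u})$. Because $(\bm{x},\bm{u}) \mapsto f(\bm{x},\bm{u},\bm{w})$ is affine for each fixed $\bm{w}$ and $v_{t+1}$ is convex by the induction hypothesis, the composition $(\bm{x},\bm{u}) \mapsto v_{t+1}(f(\bm{x},\bm{u},\bm{w}))$ is convex for each $\bm{w}$; taking expectation over $w_t$ preserves convexity, so $(\bm{x},\bm{u}) \mapsto \mathbb{E}[v_{t+1}(f(\bm{x},\bm{u},w_t))]$ is convex, provided measurability and integrability make this a well-defined extended-real-valued function. Adding the convex stage cost $r$ leaves the objective jointly convex in $(\bm{x},\bm{u})$, and it is trivially convex in the auxiliary variable $z$.

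Next I would show the feasible region is convex. Since $A$ is convex, $\bm{y} \mapsto \dist(\bm{y},A)$ is convex, and composing with the affine map $f(\cdot,\cdot,\bm{w})$ makes $(\bm{x},\bm{u}) \mapsto \dist(f(\bm{x},\bm{u},\bm{w}),A)$ convex for each $\bm{w}$. The scalar map $(a,z) \mapsto (a-z)^+$ is jointly convex and nondecreasing in $a$, so the vector composition rule yields a function of $(\bm{x},\bm{u},z)$ that is jointly convex; adding the linear term $z$ and taking expectation over $w_t$ preserves joint convexity. Intersecting the resulting sublevel set $\{\,\cdot \le \delta\,\}$ with the convex control constraint $\bm{u} \in \mathbb{U}$ gives a convex feasible set $C_t \subseteq \mathbb{R}^n \times \mathbb{U} \times \mathbb{R}$. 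Writing $v_t(\bm{x}) = \inf_{(\bm{u},z)} \big[\phi_t(\bm{x},\bm{u}) + \mathbb{I}_{C_t}(\bm{x},\bm{u},z)\big]$ with $\phi_t$ the jointly convex objective and $\mathbb{I}_{C_t}$ the indicator of $C_t$, the bracketed function is jointly convex in $(\bm{x},\bm{u},z)$, and partial minimization of a jointly convex function over a subset of its variables produces a convex function of the remaining variable; hence $v_t$ is convex, closing the induction.

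The routine parts are the affine-composition and expectation steps. The step requiring the most care is the joint convexity of the constraint functional, where one must invoke the composition rule for $(a-z)^+$ against the convex distance term together with the monotonicity in $a$ that legitimizes it. A secondary technical point is the justification that partial minimization preserves convexity, which genuinely requires \emph{joint} convexity of the objective-plus-indicator rather than separate convexity, and the handling of the extended-real-valued conventions (taking $v_t(\bm{x}) = +\infty$ when $C_t$ has empty $\bm{x}$-section) along with the integrability needed for the expectations to be meaningful.
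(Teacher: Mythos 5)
Your proposal is correct and follows essentially the same route as the paper: backward induction from $v_T = q$, joint convexity of the objective via the affine map $f$ and the induction hypothesis, convexity of the feasible set via the composition of $\dist(\cdot,A)$, the affine dynamics, and $(a-z)^+$, and then partial minimization over $(\bm{u},z)$. The only difference is presentational --- the paper carries out the partial-minimization step by hand with an $\epsilon$-suboptimal pair $(\bm{u}^i,z^i)$ for each $\bm{x}^i$ and shows the convex combination $(\bm{u}^\lambda,z^\lambda)$ is feasible for $\bm{x}^\lambda$, whereas you invoke the general lemma that partial minimization of a jointly convex function preserves convexity.
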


\begin{proof}
We use mathematical induction backward in time.
At stage $T$, $v_T = q$ is convex on $\mathbb{R}^n$.
Suppose now that $v_{t+1}$ is convex on $\mathbb{R}^n$.
At stage $t$, fix two arbitrary states $\bm{x}^1,\bm{x}^2 \in \mathbb{R}^n$.
Due to the Bellman equation \eqref{bellman},
for any $\epsilon > 0$ there exists $(\bm{u}^i, z^i) \in \mathbb{U} \times \mathbb{R}$, $i=1,2$, such that 
\begin{equation}\label{eps_ineq}
v_t(\bm{x}^i) + \epsilon >  r(\bm{x}^i, \bm{u}^i) + 
\mathbb{E} [ v_{t+1} ( f(\bm{x}^i, \bm{u}^i, w_t)) ]
\end{equation}
and
\[
\mathbb{E} \bigg [
z^i + \frac{(\dist(f(\bm{x}^i, \bm{u}^i, w_t), A)  - z^i)^+}{1-\alpha} 
\bigg ] \leq \delta.
\]
Let $\bm{x}^\lambda := \lambda \bm{x}^1 + (1-\lambda) \bm{x}^2 \in \mathbb{R}^n$ and $(\bm{u}^\lambda, z^\lambda) := \lambda (\bm{u}^1, z^1) + (1-\lambda) (\bm{u}^2, z^2) \in \mathbb{U} \times \mathbb{R}$.
We first note that $(\bm{x}, \bm{u}) \mapsto \dist( g(\bm{x}, \bm{u}, w_t), A)$ is convex for each $w_t \in \mathbb{W}$ since $\bm{x} \mapsto \dist( \bm{x}, A)$ is convex and $(\bm{x}, \bm{u}) \mapsto f(\bm{x}, \bm{u}, w_t)$ is affine. 
Thus,
$(\bm{x}, \bm{u}, z) \mapsto (\dist( f(\bm{x}, \bm{u}, w_t), A) - z)^+$ is convex for each $w_t \in \mathbb{W}$ as $a \mapsto (a)^+$ is a convex increasing function.
Therefore, 
\begin{equation} \nonumber
\begin{split}
&\mathbb{E} \bigg [
z^\lambda + \frac{(\dist (f(\bm{x}^\lambda, \bm{u}^\lambda, w_t), A)  - z^\lambda)^+}{1-\alpha} 
\bigg ]\\
& \leq \lambda \mathbb{E} \bigg [
z^1 + \frac{(\dist (f(\bm{x}^1, \bm{u}^1, w_t), A)  - z^1)^+}{1-\alpha} 
\bigg ]\\
&+ (1-\lambda) \mathbb{E} \bigg [
z^2 + \frac{(\dist (f(\bm{x}^2, \bm{u}^2, w_t), A)  - z^2)^+}{1-\alpha} 
\bigg ]\\
&\leq \lambda \delta + (1-\lambda) \delta = \delta.
\end{split}
\end{equation}
This implies that $(\bm{u}^\lambda, z^\lambda)$ is a feasible solution of the minimization problem in \eqref{bellman} with $\bm{x} = \bm{x}^\lambda$.
Thus, 
\[
v_t(\bm{x}^\lambda) \leq r(\bm{x}^\lambda, \bm{u}^\lambda)
+ \mathbb{E}[ v_{t+1}(f(\bm{x}^\lambda, \bm{u}^\lambda, w_t)) ].
\]
Since $r$ is convex on $\mathbb{R}^n \times \mathbb{U}$
and $(\bm{x}, \bm{u}) \mapsto v_{t+1}(f(\bm{x}, \bm{u}, w_t))$ is convex due to the induction hypothesis, 
we have
\begin{equation} \nonumber
\begin{split}
&v_t(\bm{x}^\lambda) \leq \lambda r(\bm{x}^1, \bm{u}^1) + (1-\lambda) r(\bm{x}^2, \bm{u}^2) \\
&+ \mathbb{E} [\lambda v_{t+1} (f (\bm{x}^1, \bm{u}^1, w_t))
+ (1-\lambda) v_{t+1} (f (\bm{x}^2, \bm{u}^2, w_t))].
\end{split}
\end{equation}
Combining this inequality with \eqref{eps_ineq}, we finally obtain
\[
v_t(\bm{x}^\lambda) < \lambda v_t(\bm{x}^1) + (1-\lambda) v_t(\bm{x}^2) + \epsilon.
\]
Letting $\epsilon \to 0$, we conclude that $\bm{x} \mapsto v_t(\bm{x})$ is convex on $\mathbb{R}^n$. This completes our inductive argument.
\end{proof}

Since $v_t$ is convex for all $t \in \mathcal{T}$ under the conditions in Proposition \ref{convex}, the objective function of the stochastic program in the Bellman equation \eqref{bellman} is convex. The constraint is also convex since $\bm{u} \mapsto f(\bm{x}, \bm{u}, w_t)$ is affine for each $(\bm{x}, w_t)$ and $\bm{x} \mapsto \dist( \bm{x}, A)$ is convex.
Therefore, the stochastic program in \eqref{bellman} is convex.
This convexity is also used in our numerical experiments in Section \ref{ex} to approximate $v_t$ as
 the convex envelope of $v_t$ discretized over~$\bm{x}$.

\subsection{Finitely Supported Disturbance Distributions} \label{empirical}

We now consider the case of finitely supported disturbance distributions. This case is practically important as most empirical distributions directly obtained from data have a finite support. 
Furthermore, the popular sample average approximation (e.g., \cite{Robinson1996}) reduces the control problem \eqref{control} with  an infinite support to the case of
finitely supported disturbance distributions.
Suppose that the support $\mathbb{W}$ of the disturbance distribution is given by
\begin{equation}\label{support}
\mathbb{W} := \{\bm{w}^{(i)} \in \mathbb{R}^l \: | \: i= 1, \cdots, N\},
\end{equation}
which is a finite set.
In this case, we can further simplify the Bellman equation \eqref{bellman} as the following deterministic optimization problem by removing the set distance function from the constraints.

\begin{theorem}[Bellman equation II]
Suppose that the disturbance distribution has a finite support of the form \eqref{support}.
Then, the Bellman equation \eqref{bellman} is equivalent to
\begin{equation} \nonumber
\begin{split}
v_t(\bm{x}) = \qquad \;\; &\\
\inf_{\bm{u} \in \mathbb{U}, y \in A^N, z \in \mathbb{R}} \; &r(\bm{x}, \bm{u}) + \frac{1}{N} \sum_{i=1}^N v_{t+1}(f(\bm{x}, \bm{u}, \bm{w}^{(i)}))\\
\mathrm{s.t.} \; & z + \frac{
\sum_{i=1}^N (\| f(\bm{x}, \bm{u}, \bm{w}^{(i)}) - y^{(i)} \| - z)^+}{N(1-\alpha)} 
\leq \delta
\end{split}
\end{equation}
for $t \in \mathcal{T}$
with $v_T(\bm{x}) = q(\bm{x})$.
\end{theorem}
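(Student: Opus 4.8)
The plan is to start from Bellman equation I in \eqref{bellman} and proceed in two conceptually separate moves: first replace the two expectations by finite averages using the finite-support assumption \eqref{support}, and then eliminate the inner distance minimization \eqref{dist} by promoting the nearest-point variables $y^{(i)}$ to decision variables of the outer program.

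First, under \eqref{support} the disturbance $w_t$ takes the values $\bm{w}^{(1)}, \ldots, \bm{w}^{(N)}$, each with probability $1/N$ (the empirical distribution underlying the sample average approximation), so both expectations in \eqref{bellman} collapse to plain averages. The objective immediately becomes $r(\bm{x}, \bm{u}) + \frac{1}{N} \sum_{i=1}^N v_{t+1}(f(\bm{x}, \bm{u}, \bm{w}^{(i)}))$, and the CVaR constraint becomes
\[
z + \frac{1}{N(1-\alpha)} \sum_{i=1}^N \big( \dist(f(\bm{x}, \bm{u}, \bm{w}^{(i)}), A) - z \big)^+ \leq \delta.
\]
The only remaining task is to show that this program is equivalent to the one in the statement, which differs solely by having each $\dist(f(\bm{x}, \bm{u}, \bm{w}^{(i)}), A)$ replaced by $\| f(\bm{x}, \bm{u}, \bm{w}^{(i)}) - y^{(i)} \|$ and by minimizing jointly over $y = (y^{(1)}, \ldots, y^{(N)}) \in A^N$.

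Since the objective does not depend on $y$, it suffices to prove that a pair $(\bm{u}, z)$ satisfies the distance-based constraint if and only if there exists some $y \in A^N$ making the norm-based constraint hold; the two infima then coincide. For the ``only if'' direction I would invoke compactness of $A$ (noted after \eqref{dist}): for each $i$ the infimum defining $\dist(f(\bm{x}, \bm{u}, \bm{w}^{(i)}), A)$ is attained at some $y^{(i)}_\star \in A$, and with this choice the two constraint expressions coincide, so feasibility transfers. For the ``if'' direction I would use that every $y^{(i)} \in A$ satisfies $\| f(\bm{x}, \bm{u}, \bm{w}^{(i)}) - y^{(i)} \| \geq \dist(f(\bm{x}, \bm{u}, \bm{w}^{(i)}), A)$ together with the monotonicity of $a \mapsto (a)^+$, which yields term by term
\[
\big( \dist(f(\bm{x}, \bm{u}, \bm{w}^{(i)}), A) - z \big)^+ \leq \big( \| f(\bm{x}, \bm{u}, \bm{w}^{(i)}) - y^{(i)} \| - z \big)^+ ;
\]
summing and restoring the $z + \tfrac{1}{N(1-\alpha)}$ structure shows the distance-based constraint is implied by the norm-based one.

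The crux — and the only place requiring care — is recognizing that promoting $y$ to an outer decision variable is legitimate precisely because the objective is independent of $y$ and because the constraint depends on each $y^{(i)}$ monotonically through the increasing map $(\cdot)^+$, so minimizing over $y$ to make the constraint as slack as possible recovers exactly the distance function. Once this monotone-relaxation argument is in place, equality of the two infima follows by matching feasible sets. I do not anticipate genuine difficulty, but I would be careful to state the uniform-weight convention explicitly, since the $\frac{1}{N}$ normalization in the statement presumes it, and to note that this reformulation is purely finite-dimensional and requires no regularity on $f$, $r$, or $q$ beyond what is already assumed.
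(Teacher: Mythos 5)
Your proof is correct and takes essentially the same route as the paper's: reduce both expectations to uniform averages over the finite support, use compactness of $A$ so that the distance is attained as a minimum, and promote the minimizers $y^{(i)}$ to decision variables, concluding that the two feasible sets (projected onto $(\bm{u},z)$) and hence the two infima coincide. You in fact spell out the monotone-relaxation step that the paper compresses into ``using an argument similar to the proof of Theorem \ref{reform1},'' and you are right to flag the implicit uniform-weight ($1/N$) convention on the empirical distribution.
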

\begin{proof}
Let $\tilde{v}_t(\bm{x})$ be the right-hand side of the equality above.
With the support \eqref{support}, the constraint in the Bellman equation \eqref{bellman} can be rewritten as
\begin{equation} \nonumber
\begin{split}
z +
\frac{\sum_{i=1}^N(\dist (f(\bm{x}, \bm{u}, \bm{w}^{(i)}), A) - z)^+}{N(1-\alpha)} 
 \leq \delta,
\end{split}
\end{equation}
where $\dist (f(\bm{x}, \bm{u}, \bm{w}^{(i)}), A)
= \min_{y^{(i)} \in A} \| f(\bm{x}, \bm{u}, \bm{w}^{(i)}) - y^{(i)} \|$ since $A$ is compact and convex.
Using an argument similar to the proof of Theorem \ref{reform1}, we have that $\tilde{v}_t = v_t$.
\end{proof}

Under the conditions in Proposition \ref{convex},
the simplified Bellman equation involves a deterministic convex program, which can be efficiently solved by several existing convergent algorithms.
The dimension of its optimization variable linearly increases the cardinality $N$ of the support $\mathbb{W}$.
It is worth mentioning that our focus is not to resolve the fundamental scalability issue in dynamic programming: the computational complexity of our approach scales exponentially with the dimension $n$ of state space as in standard dynamic programming. 
The major advantage of our method is to reformulate the triple-level optimization problem in the original Bellman equation \eqref{original} as a tractable single-level optimization problem.

\section{Risk-Constrained Safe Sets} \label{rs}

So far, we have viewed the CVaR-based safety risk as a constraint of an optimal control problem. 
In this section, 
we illustrate how the safety risk can be used to verify the safety of stochastic systems.

\subsection{Connection to Probabilistic Reachability Analysis}

We begin by establishing a few interesting connections between our risk-based approach and the probabilistic safety/reachability specifications.
To verify that a stochastic system starting from a particular initial point can be controlled to operate in a safe range $A$ with a pre-specified probability $\alpha$,
one can use the \emph{probabilistic safe set}, defined as
\begin{equation}\nonumber
\begin{split}
S_\alpha (A) := \{&\bm{x} \in \mathbb{R}^n \mid 
\exists \pi \in \Pi \mbox{ s.t. }x_0 = \bm{x},\\ 
& \mathbb{P}^\pi (x_{t} \in A, \: t =1, \cdots, T ) \geq \alpha\}.
\end{split}
\end{equation}
If $x_0 \in S_\alpha (A)$, then there exists a control strategy that guarantees the system safety with probability greater than or equal to $\alpha$.
Dynamic programming-based tools to compute such probabilistic safe sets have been developed for stochastic hybrid systems (SHS) \cite{Abate2008, Summers2010, Ding2013}, partially observable SHS \cite{Lesser2017}, and stochastic systems under distributional ambiguity \cite{Yang2017aut}.
Departing from these tools for probabilistic safe sets, our risk-constrained method provides the following novel safe sets that can also be used for safety specification and verification:

\begin{definition}[Risk-constrained safe set]
We define the \emph{risk-constrained safe set} for $A$ as
\begin{equation} \nonumber
\begin{split}
RS_{\alpha, \delta} ( A) := \{ &\bm{x} \in \mathbb{R}^n \mid \exists \pi \in \Pi  \mbox{ s.t. } x_0 = \bm{x}, \\ 
&\cvar_\alpha^\pi [\dist (x_{t+1}, A) \mid \mathcal{F}_{t-1} ] \leq \delta, t \in \mathcal{T} \}
\end{split}
\end{equation}
for some $\alpha \in (0,1)$ and $\delta \geq 0$.
\end{definition}

In words, whenever $x_0 \in RS_{\alpha, \delta} (A)$, 
we can control the system to satisfy the CVaR-based safety constraint  for all stages. 
We will introduce a method to compute the risk-constrained safe sets in the next subsection.
Before this, we take a close look at the CVaR constraint to 
relate $RS_{\alpha, \delta}(A)$ with $S_\alpha(A)$.
Our first observation is that when $\delta = 0$,
$\dist (x_{t+1}, A) = 0$ with probability 1
because $(i)$ its $(1-\alpha)$ worst-case quantile is less than or equal to zero and $(ii)$ the distance is greater than or equal to zero by definition.
This observation leads to the following proposition:

\begin{proposition} \label{hard}
If the risk threshold parameter $\delta = 0$, then 
the risk-constrained safe set $RS_\alpha (A)$ 
is a subset of the probabilistic safe set $S_\alpha (A)$. Furthermore,
\[
RS_{\alpha, 0} (A) = S_1 (A) \subseteq S_\alpha (A) \quad \forall \alpha \in (0,1).
\]
\end{proposition}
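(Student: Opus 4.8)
The plan is to reduce the whole statement to a single characterization: when $\delta = 0$, a policy $\pi$ satisfies the conditional constraint $\cvar_\alpha^\pi[\dist(x_{t+1}, A) \mid \mathcal{F}_{t-1}] \leq 0$ if and only if that same policy keeps the state almost surely inside $A$, i.e. $x_{t+1} \in A$ $\mathbb{P}^\pi$-a.s. Once this equivalence is available, both assertions follow by simply unwinding the two set definitions and comparing the probability thresholds, so essentially all the content lives in this one reduction.

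First I would prove the characterization. Writing $X := \dist(x_{t+1}, A) \geq 0$ (nonnegative by \eqref{dist}), the only nontrivial direction is that the constraint forces $X = 0$ a.s. I would use two standard properties of the conditional CVaR at level $\alpha \in (0,1)$: that it dominates the conditional mean, $\cvar_\alpha[X \mid \mathcal{F}_{t-1}] \geq \mathbb{E}[X \mid \mathcal{F}_{t-1}]$, and that $\mathbb{E}[X \mid \mathcal{F}_{t-1}] \geq 0$ because $X \geq 0$. The constraint $\cvar_\alpha[X \mid \mathcal{F}_{t-1}] \leq 0$ then squeezes $\mathbb{E}[X \mid \mathcal{F}_{t-1}] = 0$ a.s.; taking expectations and using the tower property gives $\mathbb{E}[X] = 0$, and nonnegativity forces $X = 0$ a.s. Since $A$ is compact, hence closed, $\dist(x_{t+1}, A) = 0$ is equivalent to $x_{t+1} \in A$, yielding $x_{t+1} \in A$ almost surely. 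The converse is immediate from the extremal representation \eqref{cvar}: if $x_{t+1} \in A$ a.s. then $X = 0$ a.s. and $\cvar_\alpha[X \mid \mathcal{F}_{t-1}] = 0 \leq 0$.

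With the characterization in hand I would identify the sets. A policy $\pi$ with $x_0 = \bm{x}$ meets all the constraints for $t \in \mathcal{T} = \{0, \dots, T-1\}$ exactly when $x_{t+1} \in A$ a.s. for each such $t$, i.e. when $x_s \in A$ a.s. for $s = 1, \dots, T$, which is precisely $\mathbb{P}^\pi(x_t \in A, \, t = 1, \dots, T) = 1$. Because the admissible class $\Pi$ and the initial condition $x_0 = \bm{x}$ are identical in the two definitions, the certifying policies coincide, so $RS_{\alpha, 0}(A) = S_1(A)$. The inclusion $S_1(A) \subseteq S_\alpha(A)$ is then a pure threshold comparison: any $\pi$ witnessing $\bm{x} \in S_1(A)$ satisfies $\mathbb{P}^\pi(\,\cdot\,) = 1 \geq \alpha$ for every $\alpha \in (0,1)$, so the same $\pi$ witnesses $\bm{x} \in S_\alpha(A)$. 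The first assertion, $RS_{\alpha,0}(A) \subseteq S_\alpha(A)$, is then an immediate corollary of the chain $RS_{\alpha,0}(A) = S_1(A) \subseteq S_\alpha(A)$.

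The main obstacle is the reduction in the second paragraph: arguing rigorously that a zero risk budget collapses to almost-sure containment rather than the weaker $\mathbb{P}(x_{t+1} \in A) \geq \alpha$ that a level-$\alpha$ risk measure might naively suggest. The subtlety is that $\cvar_\alpha$ averages only the worst $(1-\alpha)$ tail, and the clean route is the conditional-mean domination inequality, not a direct VaR argument: a bound on $\mathrm{VaR}_\alpha$ alone only delivers $\mathbb{P}(X = 0) \geq \alpha$, which would give the inclusion into $S_\alpha(A)$ but miss the sharp equality with $S_1(A)$. I would also take care that the CVaR inequality is read in the $\mathbb{P}$-a.s. sense for the $\mathcal{F}_{t-1}$-measurable conditional risk, so that the tower property applies legitimately.
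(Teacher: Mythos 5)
Your proof is correct and follows essentially the same route as the paper: both reduce the statement to the observation that a zero CVaR budget on the nonnegative loss $\dist(x_{t+1},A)$ forces $x_{t+1}\in A$ almost surely (using closedness of $A$), and then identify $RS_{\alpha,0}(A)$ with $S_1(A)$ and invoke $S_1(A)\subseteq S_\alpha(A)$. The only difference is in the justification of that key observation — the paper argues informally via the $(1-\alpha)$ worst-case quantile and uses the chain rule for the intersection, whereas you use the mean-domination inequality $\cvar_\alpha[X\mid\mathcal{F}_{t-1}]\geq \mathbb{E}[X\mid\mathcal{F}_{t-1}]$ together with the tower property and the fact that a finite intersection of almost-sure events is almost sure — which is a cleaner, more rigorous rendering of the same step.
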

\begin{proof}
Fix the initial state $x_0$ as $\bm{x} \in RS_{\alpha, 0} (A)$
for  $\delta = 0$. Then, there exists a strategy $\pi \in \Pi$ such that for each $\alpha \in (0,1)$, $\mathbb{P}^\pi ( \dist(x_{t+1}, A) = 0 \mid x_t, \cdots, x_1) = \mathbb{P}^\pi ( x_{t+1} \in A \mid x_t, \cdots, x_1) = 1$ for all $t \in \mathcal{T}$.
Due to the chain rule, 
\begin{equation} \nonumber
\mathbb{P}^\pi \Big ( \bigcap_{t = 1}^{T} \{x_t \in A \} \Big )= \prod_{t = 1}^{T} \mathbb{P}^\pi \Big ( x_{t} \in A \mid \bigcap_{s =1}^{t-1} \{ x_s \in A \} \Big ) = 1,
\end{equation}
which implies that 
$\bm{x} \in S_1 (A)$.
The reverse of the aforementioned argument is also valid.
Thus, if $\bm{x} \in S_1(A)$, then $\bm{x} \in RS_{\alpha, 0}(A)$.
Note also that $S_1(A) \subseteq S_\alpha (A)$ for any $\alpha \in (0, 1)$, the statement in the proposition holds.
\end{proof}

Proposition \ref{hard} implies that $RS_{\alpha, 0} (A)$ can be used for very conservative decision-making in terms of safety via hard constraints. 
When $\delta > 0$, we have another interesting connection between risk-constrained and probabilistic safe sets as follows:
\begin{proposition}\label{relax}
Let $A_\delta := \{ \bm{x} \in \mathbb{R}^n \mid \dist (\bm{x}, A) \leq \delta \}$ for any $\delta > 0$. Then, the risk-constrained safe set $RS_{\alpha, \delta}(A)$ is a subset of the probabilistic safe set $S_{\alpha^T} (A_\delta)$, i.e.,
\[
RS_{\alpha, \delta}(A) \subseteq S_{\alpha^T} (A_\delta).
\]
\end{proposition}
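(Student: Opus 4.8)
The plan is to show that the \emph{same} admissible strategy $\pi$ that witnesses membership in $RS_{\alpha,\delta}(A)$ also witnesses membership in $S_{\alpha^T}(A_\delta)$. Fix $\bm{x} \in RS_{\alpha,\delta}(A)$ and pick $\pi \in \Pi$ with $x_0 = \bm{x}$ satisfying $\cvar_\alpha^\pi[\dist(x_{t+1},A)\mid\mathcal{F}_{t-1}] \leq \delta$ for every $t \in \mathcal{T}$. Since $A_\delta = \{\bm{y} : \dist(\bm{y},A)\leq\delta\}$, the event $\{x_{t+1}\in A_\delta\}$ is exactly $\{\dist(x_{t+1},A)\leq\delta\}$, so the whole task reduces to lower-bounding $\mathbb{P}^\pi\big(\bigcap_{t=1}^T\{x_t\in A_\delta\}\big)$ by $\alpha^T$.

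The first key step is a pointwise conversion of each CVaR bound into a conditional probability bound: for the nonnegative loss $X := \dist(x_{t+1},A)$, I would show that $\cvar_\alpha^\pi[X\mid\mathcal{F}_{t-1}]\leq\delta$ forces $\mathbb{P}^\pi(X\leq\delta\mid\mathcal{F}_{t-1})\geq\alpha$ almost surely. One route uses the standard fact that conditional VaR never exceeds conditional CVaR together with the quantile characterization $\mathrm{VaR}_\alpha[X\mid\mathcal{F}_{t-1}]\leq\delta \Leftrightarrow \mathbb{P}^\pi(X\leq\delta\mid\mathcal{F}_{t-1})\geq\alpha$. A more robust route, which sidesteps atoms in the loss distribution, takes a (near-)minimizer $Z^\star\leq\delta$ in the extremal representation \eqref{risk} and applies the elementary inequality $(X-Z^\star)^+\geq(\delta-Z^\star)\mathbf{1}_{\{X>\delta\}}$ to deduce $\mathbb{P}^\pi(X>\delta\mid\mathcal{F}_{t-1})\leq 1-\alpha$. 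Either way, for each $t\in\mathcal{T}$ we obtain $\mathbb{P}^\pi(x_{t+1}\in A_\delta\mid\mathcal{F}_{t-1})\geq\alpha$ almost surely.

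The second step multiplies these one-step bounds. Writing $E_s := \{x_s\in A_\delta\}$, the structural observation is that $x_s$ is $\mathcal{F}_{s-1}$-measurable, so each $E_s$, and hence the partial intersection $G_k := \bigcap_{s=1}^k E_s$, is $\mathcal{F}_{k-1}$-measurable. The constraint at stage $t=k$ gives $\mathbb{P}^\pi(E_{k+1}\mid\mathcal{F}_{k-1})\geq\alpha$, so conditioning on $\mathcal{F}_{k-1}$ and pulling the $\mathcal{F}_{k-1}$-measurable factor $\mathbf{1}_{G_k}$ outside the conditional expectation yields $\mathbb{P}^\pi(G_{k+1})\geq\alpha\,\mathbb{P}^\pi(G_k)$. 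With the base case $\mathbb{P}^\pi(E_1)\geq\alpha$ (the stage-$0$ constraint, conditioned on the trivial $\sigma$-algebra), induction on $k$ gives $\mathbb{P}^\pi\big(\bigcap_{s=1}^T E_s\big)\geq\alpha^T$, which is precisely the defining inequality of $S_{\alpha^T}(A_\delta)$, so $\bm{x}\in S_{\alpha^T}(A_\delta)$.

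I expect the main obstacle to be the one-step lag in the conditioning: the CVaR constraint conditions on $\mathcal{F}_{t-1}$ yet controls $x_{t+1}$, so the naive chain rule (which would condition $E_{k+1}$ on $\mathcal{F}_k$) does not apply directly. The resolution is exactly the measurability bookkeeping above: because $\{x_s\in A_\delta\}$ is already $\mathcal{F}_{s-1}$-measurable, the $\alpha$-factors still telescope despite the lag. A secondary care point is justifying the CVaR-to-probability conversion when $\dist(x_{t+1},A)$ has atoms, most notably the atom at $0$ created whenever $x_{t+1}\in A$; for that case the direct extremal-representation argument is preferable to the VaR-based one.
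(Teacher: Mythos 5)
Your proposal is correct and follows essentially the same route as the paper's proof: keep the same witnessing policy, convert each conditional CVaR bound into the conditional probability bound $\mathbb{P}^\pi(x_{t+1}\in A_\delta\mid\mathcal{F}_{t-1})\geq\alpha$, and telescope via the chain rule to get $\alpha^T$. The only difference is that you actually justify the CVaR-to-probability conversion (via $\mathrm{VaR}\leq\mathrm{CVaR}$ or the extremal representation) and the measurability bookkeeping behind the product bound, both of which the paper asserts without proof.
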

\begin{proof}
Choose an arbitrary initial state $\bm{x}$ from $RS_{\alpha,\delta} (A)$. Then, there exists a control policy $\pi \in \Pi$ such that for each $\alpha \in (0,1)$,
$\mathbb{P}^\pi ( \dist(x_{t+1}, A) \leq \delta \mid x_t, \cdots, x_1) = \mathbb{P}^\pi ( x_{t+1} \in A_\delta \mid x_t, \cdots, x_1) \geq  \alpha$ for all $t \in \mathcal{T}$. The chain rule for conditional probability implies that
$\mathbb{P}^\pi  ( \bigcap_{t = 1}^{T} \{x_t \in A_\delta \}  )= \prod_{t = 1}^{T} \mathbb{P}^\pi  ( x_{t} \in A_\delta \mid\bigcap_{s =1}^{t-1} \{ x_s \in A_\delta \} ) \geq \alpha^T$.
Thus, $\bm{x} \in S_{\alpha^T} (A_\delta)$ for each $\alpha \in (0,1)$.
\end{proof}

Due to the definition of set distance-based safety risk, 
Proposition \ref{relax} compares $RS_{\alpha, \delta}(A)$ with the probabilistic safe set for a relaxed desirable set $A_\delta$.
To compare with $S_{\alpha}(A)$ instead of $S_{\alpha}(A_{\delta})$,
it is often useful to consider $RS_{\alpha, \delta}(A_{-\delta})$, which is contained in $S_{\alpha^T}(A)$, where
$A_{-\delta} := \{\bm{x} \in \mathbb{R}^n \mid  
\dist (\bm{x}, A^c ) \geq \delta \}$ for $\delta > 0$.


%
%


\subsection{From Value Functions to Risk-Constrained Safe Sets}

We now propose a simple approach to computing the risk-constrained safe sets by using the value function of \eqref{control}.
The key idea is that $\bm{x} \in RS_{\alpha, \delta}(A)$ 
if the control problem \eqref{control} with $x_0 = \bm{x}$ has a non-empty feasible set.

\begin{theorem}\label{comp}
Suppose that
\[
r(\bm{x}, \bm{u}) < +\infty \;\; \forall \bm{u} \in \mathbb{U}, \quad  q(\bm{x}) < +\infty
\]
for each $\bm{x} \in \mathbb{R}^n$.
Then, the risk-constrained safe set can be computed as
\[
RS_{\alpha, \delta} (A) = \{ \bm{x} \in \mathbb{R}^n \mid
v_0(\bm{x}) < +\infty \}.
\]
\end{theorem}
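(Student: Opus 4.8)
The plan is to prove the set equality by a direct feasibility argument, exploiting the standard convention that an infimum over an empty set equals $+\infty$. The crucial observation, already signaled in the text preceding the theorem, is that $\bm{x} \in RS_{\alpha,\delta}(A)$ if and only if the constrained problem \eqref{control} with $x_0 = \bm{x}$ admits at least one feasible policy, while $v_0(\bm{x})$ is by \eqref{value} exactly the infimum of the objective over that same feasible set (note that for $t=0$ the index set $\mathcal{T}_0$ coincides with $\mathcal{T}$, so the constraints defining $v_0$ and those defining $RS_{\alpha,\delta}(A)$ are identical). I would therefore reduce the theorem to showing that the feasible set is non-empty precisely when $v_0(\bm{x}) < +\infty$.

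For the inclusion $RS_{\alpha,\delta}(A) \subseteq \{\bm{x} \mid v_0(\bm{x}) < +\infty\}$, I would fix $\bm{x} \in RS_{\alpha,\delta}(A)$ and invoke the definition of the risk-constrained safe set to produce an admissible policy $\pi \in \Pi$ with $x_0 = \bm{x}$ satisfying $\cvar_\alpha^\pi[\dist(x_{t+1}, A) \mid \mathcal{F}_{t-1}] \leq \delta$ for all $t \in \mathcal{T}$. This $\pi$ is feasible for the minimization defining $v_0(\bm{x})$, so $v_0(\bm{x})$ is bounded above by the cost-to-go attained by $\pi$; using the hypotheses $r(\bm{x},\bm{u}) < +\infty$ and $q(\bm{x}) < +\infty$ together with the finiteness of the horizon $T$, this cost-to-go is finite, whence $v_0(\bm{x}) < +\infty$.

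For the reverse inclusion I would argue by contraposition on the feasibility side: if $\bm{x} \notin RS_{\alpha,\delta}(A)$, then no admissible policy satisfies the CVaR constraints, so the feasible set of the problem defining $v_0(\bm{x})$ is empty, and the infimum over the empty set gives $v_0(\bm{x}) = +\infty$. Equivalently, $v_0(\bm{x}) < +\infty$ forces a non-empty feasible set, which is exactly the membership condition for $RS_{\alpha,\delta}(A)$. Combining the two directions yields the stated equality.

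The main obstacle I anticipate is the first inclusion, specifically ruling out the degenerate possibility that $v_0(\bm{x}) = +\infty$ even when a feasible policy exists — which could happen if the objective attained the value $+\infty$ along some feasible trajectory. The finiteness hypotheses $r(\bm{x},\bm{u}) < +\infty$ and $q(\bm{x}) < +\infty$ are exactly what exclude this: they force the objective to be real-valued for every admissible policy, so that a non-empty feasible set yields a finite infimum rather than $+\infty$. I would make this precise by noting that, for the feasible $\pi$ constructed above, the cost $\sum_{t=0}^{T-1} r(x_t, u_t) + q(x_T)$ is a finite sum of finite-valued terms over the finite horizon, whose expectation under $\mathbb{P}^\pi$ is finite by the integrability inherent to the well-posedness of $v_0$. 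With this delicate point settled, the equivalence ``non-empty feasible set $\Leftrightarrow v_0(\bm{x}) < +\infty$'' is immediate and the theorem follows.
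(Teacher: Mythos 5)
Your proposal is correct and follows essentially the same route as the paper's own proof: both directions reduce to the observation that $v_0(\bm{x})$ is the infimum over exactly the feasible set whose non-emptiness defines $RS_{\alpha,\delta}(A)$, with the finiteness hypotheses on $r$ and $q$ ruling out an infinite objective along a feasible policy. Your explicit flagging of the integrability subtlety in the first inclusion is, if anything, slightly more careful than the paper's treatment.
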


\begin{proof}
Fix $\bm{x} \in RS_{\alpha, \delta} (A)$.
Then, there exists a control policy $\hat{\pi} \in \Pi$ such that 
$\cvar_\alpha^{\hat{\pi}} [\dist (x_{t+1}, A) \mid \mathcal{F}_{t-1} ] \leq \delta$, $t \in \mathcal{T}$, where $x_0 = \bm{x}$.
Suppose that $v_0 (\bm{x}) = +\infty$. Since $\hat{\pi}$ satisfies all the risk constraints,
$\mathbb{E}^{\hat{\pi}}[ \sum_{t=0}^{T-1} r(x_t, u_t) + q(x_T) \mid x_0 = \bm{x}] \geq v_0 (\bm{x}) = +\infty$.
However, $r(x_t, u_t) < +\infty$ $\forall t \in \mathcal{T}$ and $q(x_T) < +\infty$ under the policy $\hat{\pi}$. 
This is a contradiction and thus $v_0 (\bm{x}) < +\infty$.

We now choose an arbitrary initial state $x_0 = \bm{x}$ such that $v_0(\bm{x}) < +\infty$.
For any $\epsilon > 0$, there exists $\tilde{\pi} \in \Pi$ such that
$\mathbb{E}^{\tilde{\pi}} [ \sum_{t=0}^{T-1} r(x_t, u_t) + q(x_T) \mid x_0 = \bm{x} ] < v_t(\bm{x}) + \epsilon < +\infty$.
Thus, 
 $\cvar_\alpha^{\tilde{\pi}} [ \dist (x_{t+1}, A) \mid \mathcal{F}_{t-1}] \leq \delta$, $t \in \mathcal{T}$, where $x_0 = \bm{x}$.
 This implies that $\bm{x} \in RS_{\alpha, \delta}(A)$. 
\end{proof}
 
By Theorem \ref{comp}, 
we can use our dynamic programming solution of \eqref{control} in two useful ways. 
First, we can verify whether a given initial state $x_0$ will satisfy all the safety risk constraints by checking the value $v_0(x_0)$.
Second, we can explicitly construct an optimal risk-averse policy $\pi^{opt}$ of \eqref{control} by solving associated Bellman equations backward in time.
In particular, under the measurable selection condition (e.g., \cite{Hernandez2012}), the Bellman equation admits an optimal solution $\bm{u}^{opt}$ for each $(t, \bm{x})$ and thus one can construct a non-randomized Markov policy, which is optimal, by letting $\pi_t^{opt}(\bm{x}) := \bm{u}^{opt}$.

\section{Application to Inventory Control}\label{ex}

\begin{figure}[tb]
\begin{center}
\includegraphics[width=2.3in]{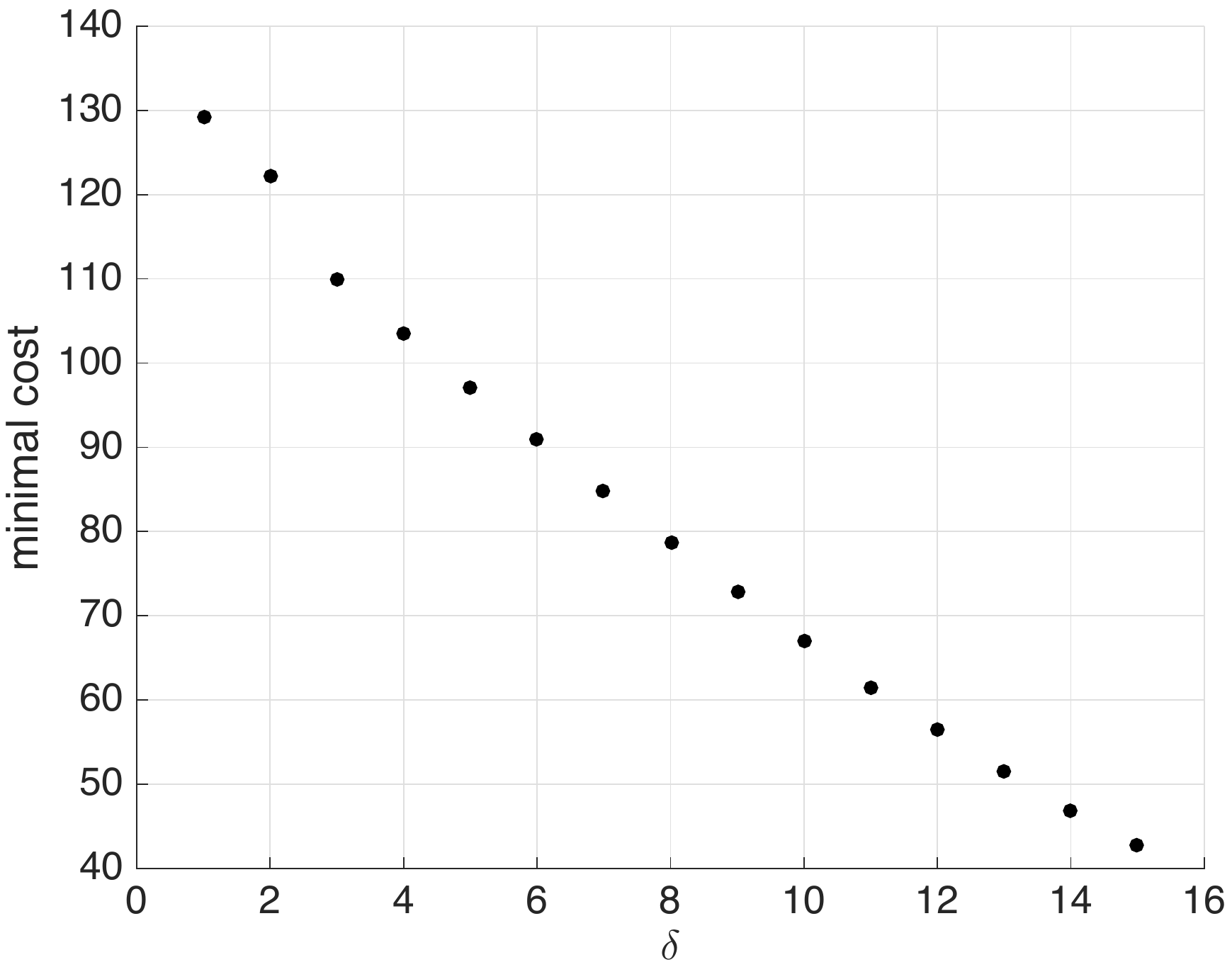}
\caption{Minimal expected cost over independent simulations for several different parameters $\delta$.
}  
\label{fig:tradeoff}    
\end{center}           
\end{figure}

To demonstrate the advantages of using our  approach in a realistic problem, we examine an inventory control model. We define the state evolution function as 
\[
x_{t+1} = x_t + u_t - w_t,
\]
where $u_t$ is the quantity ordered/received at stage $t$, $w_t$ is the demand at stage $t$, and $x_t$ is the current inventory level. The control is bounded by $u \in [0, 32]$, and we use a time horizon of one week, i.e., $\mathcal{T} := \{0,1, \cdots, 7\}$.  Any demand that is left unsatisfied is backlogged for the next stage, which is represented as a negative state value. We define the stage-wise cost as 
\[
r(x_t, u_t, w_{t}) := c_o(x_t + u_t - w_{t})^+ + c_u (w_{t} - x_t - u_t)^+,
\]
where $c_o = 1$ represents the holding or storage cost and $c_u = 1$ represents the cost of lost sales due to unavailable inventory.
The desired set for safety is chosen as $A = [0, 100]$.  
We use $N=40$ samples of $w_t$, generated from the distribution $w_t \sim \mathcal{N}(20,6)$.

We first examine the tradeoff between the mean performance and the risk tolerance of our controller.  
Fig. \ref{fig:tradeoff} shows the mean total cost over independent simulations for several different risk tolerance values $\delta$ when $\alpha=0.90$.  
As the constraint is tightened by decreasing $\delta$, the mean total cost increases. Having a larger $\delta$ corresponds to a larger allowable deviation from the desired set $A$, so the total cost decreases.  The choice of optimal $\delta$ thus depends on the designer's preference for either a low-risk or a low-cost controller.  
Fig.~\ref{fig:value} plots the value function at stage $t=0$, for various values of $\delta$. Here, we can see how higher values of $\delta$ generate lower expected costs.  This figure also shows another effect of tightening the CVaR constraint: the set  $RS_{\alpha,\delta}(A)$ of feasible initial states  becomes smaller as $\delta$ decreases.  

\begin{figure}[tb]
\begin{center}
\includegraphics[width=2.5in]{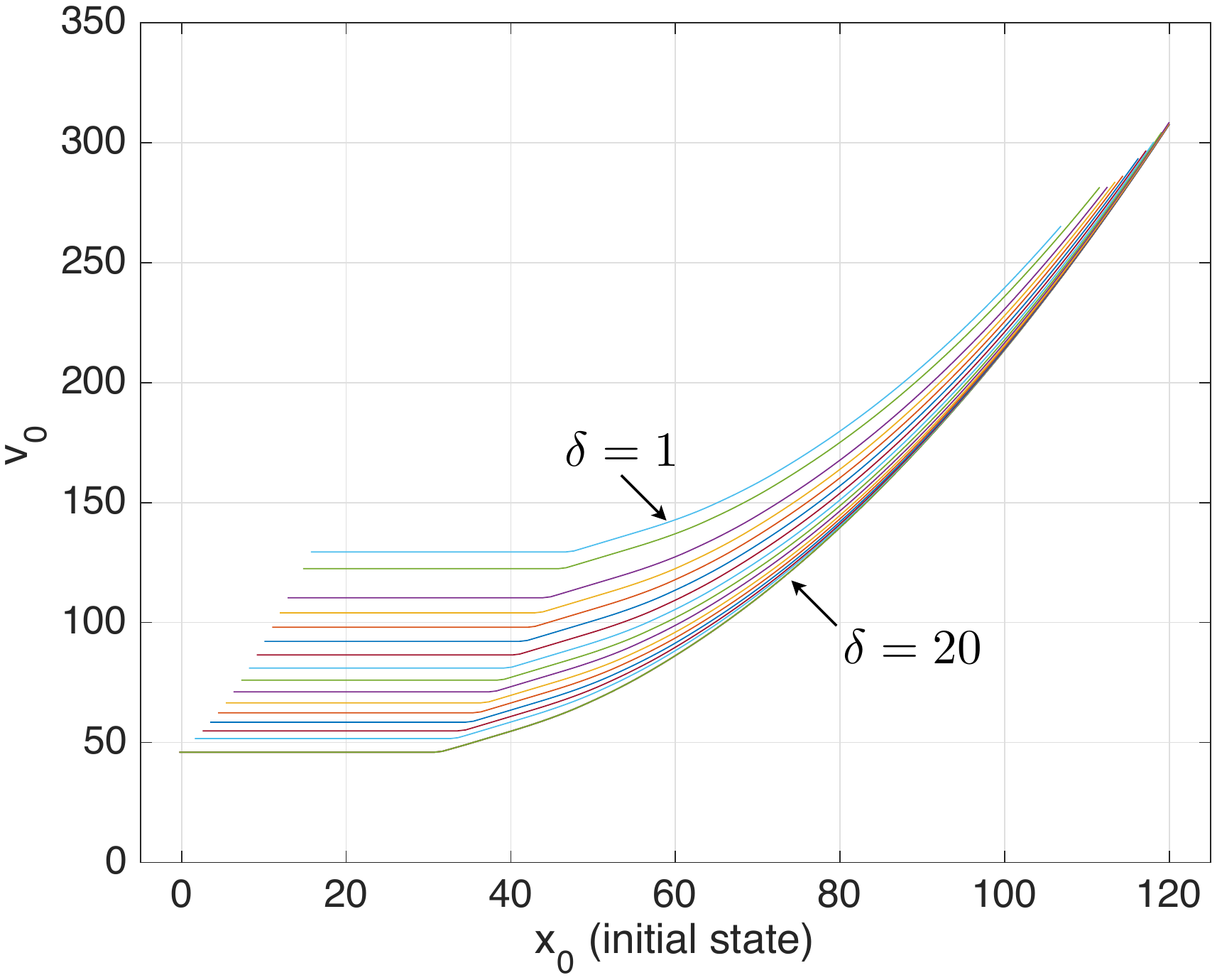}
\caption{Value function $v_0$ (minimal expected cost) for $\delta = 1, \cdots, 20$ on $\{x_0 : v_0(x_0) < +\infty\}$.
}  
\label{fig:value}    
\end{center}           
\end{figure}

Let $RS_{\alpha,\delta, t}(A)$ be the time-dependent risk-constrained safe set initialized at stage $t$, that is the set of $x_t$'s for which the CVaR constraint can be satisfied at all future times.  
As shown in Fig. \ref{fig:td}, the time-dependent safe set shrinks as we move backwards in time.  
A state at stage $t$ is feasible and is in the time-dependent risk-constrained safe set if a control value can be found that satisfies two constraints: $(i)$ $\cvar_{\alpha}[\dist(x_{t+1}, A) \mid \mathcal{F}_{t-1}]$ must be no greater than $\delta$ and $(ii)$ the future state must fall within the safe set in the next stage. 
The second constraint is strict and must hold for even the largest possible demand $w_{t}$. 
The state at stage $t$ must be large enough that even if we encounter the maximum demand $\max \{w^{(i)} \} > u_{\max} := 32$, which leads to $x_{t+1} = x_t + u_{\max} - \max \{w^{(i)} \} < x_t$, the future state remains within the safe set.  This leads to the minimum feasible state at stage $t$ being larger than the minimum feasible state at stage $t+1$.  By a similar logic, 
 the upper bound of $RS_{\alpha,\delta, t}(A)$ is no smaller at time $t$ than $t+1$ since $\min \{w^{(i)} \}>0$. In this case, however, the CVaR constraint $(i)$ is tighter than the constraint $(ii)$--the state at stage $t$ must be small enough to guarantee the probability of exceeding $A_{\max} := 100$ is small. We observe that the largest state for which the CVaR constraint is satisfied is constant across all of the stages. For this reason,  the upper bound of $RS_{\alpha,\delta, t}(A)$ is also constant across all stages in this example.

\begin{figure}[tb]
\begin{center}
\includegraphics[width=2.3in]{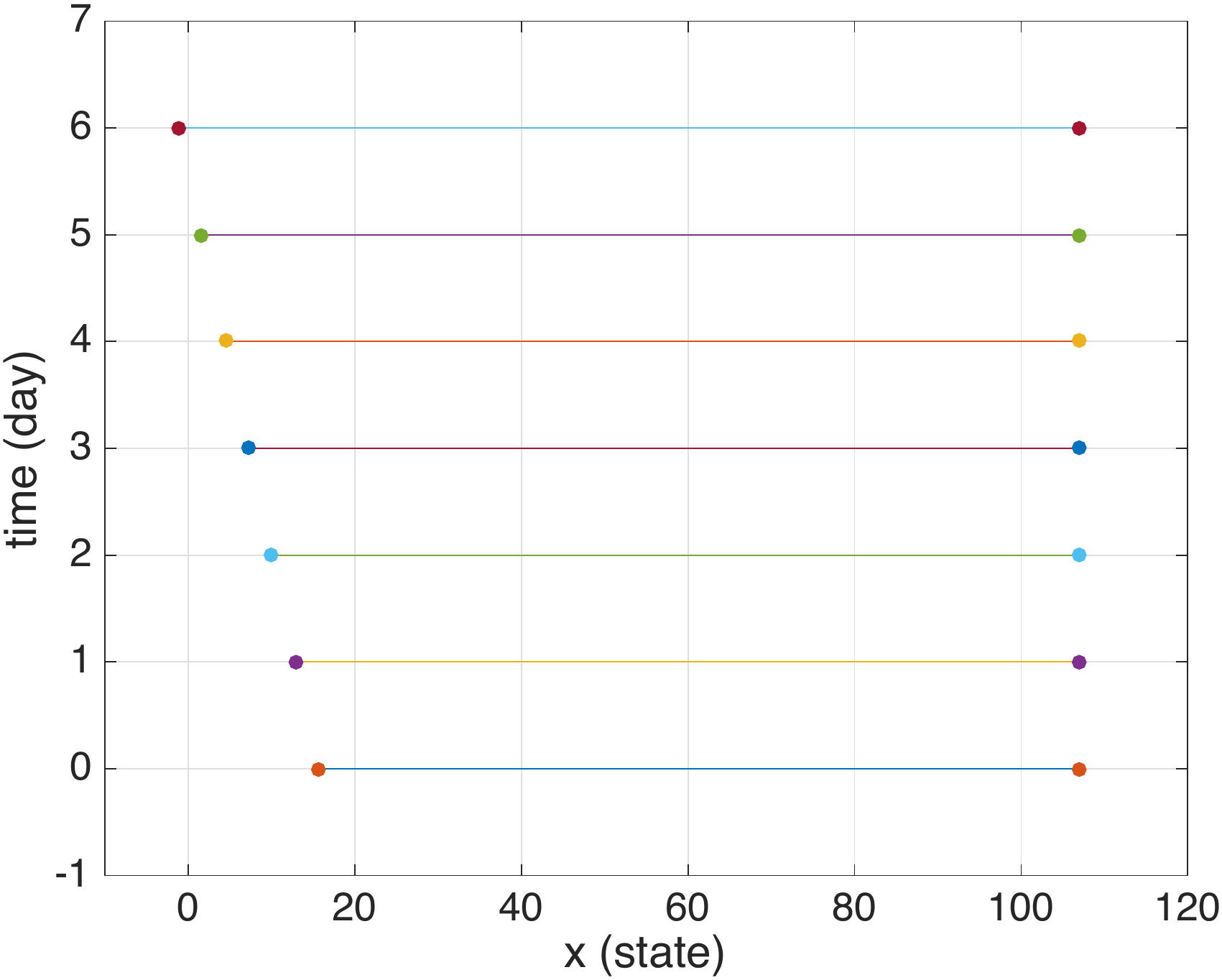}
\caption{Time-dependent risk-constraint safe set $RS_{\alpha, \delta, t}$ for $t= 0, \cdots, 6$.
}  
\label{fig:td}    
\end{center}           
\end{figure}

%
%
%
%
%
%
%
%
%
%

\section{Conclusions and Future Work}

A risk-based approach has been proposed for safety-aware optimal control of stochastic systems. 
We developed a computationally tractable dynamic programming solution, which provides a risk-constrained optimal controller and safe set.
 The latter can be used for verifying the safety of stochastic systems in a risk-constrained manner while enjoying useful connections with probabilistic safe sets.  
 We also identified the tradeoff between the risk tolerance and mean performance of our controller through a numerical example.
This approach can be extended in several interesting ways. To alleviate the dimensionality issue in our dynamic program- ming solution, it is worth exploring an occupation measure- based method. Furthermore, a distributionally robust control tool can immunize the proposed controller from potential errors in the probability distribution of disturbances.


\bibliographystyle{iEEEtran}

\bibliography{reference}
\vfill\eject

\end{document}